\theoremstyle{plain}
\newtheorem{thm}{Theorem}[section]
\newtheorem*{theorem*}{Main theorem}
\theoremstyle{definition}
\theoremstyle{definition}
\newtheorem{rem}[thm]{Remark}
\let\Im\relax
\let\H\relax
\let\Z\relax
\let\Q\relax
\let\dh\relax
\DeclareMathOperator{\Im}{\mathrm{Im}}  
\DeclareMathOperator{\Z}{\mathbb{Z}}
\DeclareMathOperator{\N}{\mathbb{N}}
\DeclareMathOperator{\caln}{\mathcal{N}}
\DeclareMathOperator{\Q}{\mathbb{Q}}
\DeclareMathOperator{\calm}{\mathcal{M}}
\DeclareMathOperator{\calr}{\mathcal{R}}
\DeclareMathOperator{\R}{\mathbb{R}} 
\DeclareMathOperator{\cals}{\mathcal{S}}
\DeclareMathOperator{\cala}{\mathcal{A}}
\DeclareMathOperator{\calc}{\mathcal{C}}
\DeclareMathOperator{\H}{\mathbb{H}}
\DeclareMathOperator{\C}{\mathbb{C}}
\DeclareMathOperator{\calf}{\mathcal{F}}
\DeclareMathOperator{\G}{\Gamma} 
\DeclareMathOperator{\hypx}{\mu_{\mathrm{hyp}}} 
\DeclareMathOperator{\cald}{\mathcal{D}_{\mathrm{hyp}}}
\DeclareMathOperator{\dk}{{\it{d}}_{{\it{k}}}}
\DeclareMathOperator{\tnc}{\mathcal{T}_{{\it{n}}}^{\mathrm{cusp}}}
\DeclareMathOperator{\tmc}{\mathcal{T}_{{\it{m}}}^{\mathrm{cusp}}}
\DeclareMathOperator{\tmndc}{\mathcal{T}_{{\frac{\it{mn}}{d^2} }}^{\mathrm{cusp}}}
\DeclareMathOperator{\tmndcw}{\mathcal{T}_{{\frac{{\it{mn}}}{{\it{d}}^{2}},{\it{2}}}}^{\mathrm{cusp}}}
\DeclareMathOperator{\tmndlw}{\mathcal{T}_{{\frac{{\it{mn}}}{{\it{d}}^{2}},{\it{2}}}}^{{\it{L}}^2}}
\DeclareMathOperator{\vx}{\mathrm{vol}_{\mathrm{hyp}}}
\DeclareMathOperator{\tnl}{\mathcal{T}_{\it{n}}^{{\it{L}}^2}}
\DeclareMathOperator{\Sk}{\mathcal{S}^{{\it{k}}}(\G)}
\DeclareMathOperator{\bk}{\mathcal{B}_{\G}^{{\it{k}}}}
\DeclareMathOperator{\dh}{d_{\mathrm{hyp}}}
\title[Estimates of cusp forms]{Estimates of cusp forms for certain cocompact arithmetic subgroups }
\author{Anilatmaja Aryasomayajula}
\address{Department of Mathematics, Indian Institute of Science Education and Research (IISER) Tirupati, 
Transit campus at Sri Rama Engineering College, Karkambadi Road,
Mangalam (B.O),Tirupati-517507, India.}
\email{anil.arya@iisertirupati.ac.in}
\author{Baskar Balasubramanyam}
\address{Department of Mathematics, Indian Institute of Science Education and Research (IISER) Pune,
Dr. Homi Bhabha Road, Pashan, Pune 411008, India.
}
\email{baskar@iiserpune.ac.in}
\date{\today}
\subjclass[2010]{11F11, 11F12}
\keywords{Sup-norm bounds of cusp forms}
\begin{document}
\begin{abstract}
In this article, we compute estimates of Hecke eigen cusp forms, associated to a certain cocompact Fuchsian subgroup. Let $\G\subset \mathrm{PSL}_{2}(\R)$ be a co-compact Fuchsian subgroup associated to a division quaternion algebra $\cala$ defined over $\Q$. Let $X:=\G\backslash \H$ denote the quotient space, which admits the structure of a compact hyperbolic Riemann surface. Let $\Sk$ denote the complex vector space of cusp forms of weight-$k$ with respect to $\G$, and let $|\cdot|_{\mathrm{pet}}$ denote then point-wise Petersson norm on $\Sk$. Then, for $k\gg1$, and any $f\in\Sk$, a Hecke eigen cusp form, which is normalized with respect to the Petersson inner-product on $\Sk$, and for any $\epsilon >0$, we derive the following estimate
\begin{equation*}
\sup_{z\in X}\big|f(z)\big|_{\mathrm{pet}}=O_{\cala,\epsilon}\big(k^{\frac{1}{2}-\frac{1}{12}+\epsilon}\big),
\end{equation*}
where the implied constant depends on the quaternion algebra $\cala$, and on the choice of $\epsilon>0$.
\end{abstract}
\vspace{0.2cm}

\maketitle
\section{Introduction}\label{intro}
\subsection{History and Background}\label{subsec1.1}
Estimates of automorphic forms has been a subject of great interest in the recent past. In this article, we derive estimates of holomorphic Hecke eigen cusp forms, and improve upon existing estimates of Das and Sengupta from \cite{das-sen}.  

We now briefly elucidate the history of the problem. Let $\G$ be a co-compact arithmetic subgroup arising from the unit group of a quaternion division algebra (see section \ref{subsec-2.1} for precise definition of $\G$). Let $X:=\G\backslash \H$ be the quotient space, which admits the structure of a hyperbolic Riemann surface of finite hyperbolic volume, and let $\cald$ denote the hyperbolic Laplacian on $X$, which acts on smooth functions defined on $X$.  

Let $\varphi$ be a Hecke eigen Maass form with Laplacian eigenvalue $\lambda$, and we assume that $\varphi$ is $L^{2}$-normalized. With hypothesis as above, for any 
$\epsilon>0$, in a seminal article \cite{iwaniec-sarnak}, Iwaniec and Sarnak proved the following estimate
\begin{align}\label{sarnak-estimate}
\sup_{z\in X}\big|\varphi(z \big|=O_{\G,\epsilon}\big(\lambda^{\frac{5}{24}+\epsilon}\big),
\end{align}
where the implied constant depends on the Fuchsian subgroup $\G$, and on the choice of $\epsilon>0$. 

Let hypothesis be as above, i.e., $\G$ is a co-compact arithmetic subgroup as described above, or a congruence subgroup of level $N$, and let $\varphi$ be a Hecke 
eigen Maass form, which is $L^{2}$-normalized. Then, in 2004, in a famous letter  \cite{letter-sarnak}, for and any $\epsilon>0$, Sarnak conjectured the following estimate
\begin{align}\label{sarnak-conj}
\sup_{z\in X}\big|\varphi(z) \big|=O_{\G,\epsilon}\big( \lambda^{\epsilon}\big),
\end{align}
where the implied constant depends on the Fuchsian subgroup $\G$, and on the choice of $\epsilon>0$.

\vspace{0.1cm}
In 2018, in \cite{pablo}, Ramacher and Wakatsuki have extended estimate \eqref{sarnak-estimate} to compact arithmetic quotients of semi simple Lie groups. 

\vspace{0.1cm}
Sarnak's conjecture \eqref{sarnak-conj} has inspired and instigated deep investigations on estimates of Hecke eigen Maass forms and holomorphic Hecke eigen cusp forms. Let $|\cdot|_{\mathrm{pet}}$ denote the point-wise Petersson norm on $\Sk$, which is given by equation \eqref{pet-norm}. 

One of the first investigations of estimates of holomorphic cusp forms was initiated by Jorgenson and Kramer in \cite{jk1}. Let $\G:=\Gamma_{0}(N)$, and let $\Sk$ denote the complex vector space of cusp forms of weight-$k$ with respect to $\G$. Let $\big\lbrace f_{1},\ldots, f_{\dk} \big\rbrace$ be an orthonormal basis of $\Sk$, with respect to Petersson inner-product on $\Sk$. For any $z,w\in\H$, the Bergman kernel associated to $\Sk$ is given by the following formula
\begin{align*}
\bk(z,w):=\sum_{j=1}^{\dk}f_{j}(z)\overline{f_{j}(w)}. 
\end{align*}

Then, in 2004, for $k=2$, Jorgenson and Kramer established the following estimate
\begin{align*}
\big|\mathcal{B}_{\G}^{2}(z,z)\big|_{\mathrm{pet}}:=y^{2}\big|\mathcal{B}_{\G}^{2}(z,z)\big|=O(1),
\end{align*}
and the implied constant is independent of the level of  the arithmetic subgroup $\Gamma_{0}(N)$. 

Furthermore, in 2013, in \cite{jk2}, Friedman, Jorgenson, and Kramer, extended their results to higher weights. For $\G$ any cocompact Fuchsian subgroup, they proved the following estimate
\begin{align*}
\big|\bk(z,z)\big|_{\mathrm{pet}}=O_{\G}(k);
\end{align*}
and when $\G$ is noncompact, they proved the following estimate
\begin{align*}
\big|\bk(z,z)\big|_{\mathrm{pet}}=O_{\G}\big(k^{\frac{3}{2}}\big).
\end{align*}

Moreover, the implied constants in both the above estimates were proved to be stable in covers.
 
In 2007, for $ f \in \mathcal S^k (\mathrm{SL}_{2}(\Z))$, a Hecke eigen cusp form, which is normalized with respect to the Petersson inner-product, for any $\epsilon>0$, in \cite{xia}, Xia proved the following estimate 
\begin{align}\label{xia}
\sup_{z\in \mathrm{SL}_2 (\Z) \backslash \H}\big|f(z)\big|_{\mathrm{pet}}=O_{\epsilon}\big(k^{\frac{1}{4}+\epsilon}\big),
\end{align}
where the implied constant depends only on the choice of $\epsilon>0$. Furthermore, the above estimate is proved to be optimal.  

In 2010, in \cite{blomer-holowinsky}, Blomer and Holowinsky have derived estimates of  holomorphic Hecke eigen cusp forms and Hecke eigen Maass forms associated to the arithmetic subgroup $\G_{0}(N)$, in terms of both the level aspect, and the weight aspect. Let $N\in\N$ be square-free, and let $f \in \mathcal S^k (\G_0 (N))$ be a Hecke eigen cusp form of weight-$k$, which is normalized with respect to the Petersson inner-product.  Blomer and Holowinsky then show that 
\begin{align}\label{blomer-holow}
\sup_{z\in \G_0 (N) \backslash \H}\big|f(z)\big|_{\mathrm{pet}}=O_{k}\big(N^{-\frac{1}{37}}\big),
\end{align}
where the implied constant depends only on $k$. The above estimate also holds true for any Hecke eigen Maass form, which is $L^2$-normalized, and the implied constant depends only on $\lambda$, the Laplacian eigenvalue of the Maass form. 

With hypothesis as above, for any $\epsilon>0$, Blomer and Holowinsky made the following conjecture 
\begin{align*}
\sup_{z\in \G_0 (N) \backslash \H}\big|f(z)\big|_{\mathrm{pet}}=O_{\epsilon}\big(k^{\frac{1}{4}+\epsilon}\cdot N^{-\frac{1}{2}+\epsilon}\big)
\end{align*}
where the implied constant depends only on the choice of $\epsilon>0$.

We now introduce our main result. Sarnak's conjecture on Hecke eigen Maass forms is also expected to hold true for holomorphic Hecke eigen cusp forms associated to a co-compact Fuchsian subroup $\G$, which is as described above. 

Let $f\in\Sk$ be a Hecke eigen cusp form, which is normalized with respect to the Petersson inner-product. Then, the holomorphic version of Sarnak's conjecture is the following estimate
\begin{align}\label{conj-sarnak}
\sup_{z\in X}\big| f(z)|_{\mathrm{pet}}=O_{\epsilon}\big(k^{\epsilon}\big),
\end{align}
where the implied constant depends only on the choice of $\epsilon>0$.

For any general $f\in\Sk$, which is normalized with respect to the Petersson inner-product, the following estimate is the convexity estimate
\begin{align*}
\sup_{z\in X}\big| f(z)|_{\mathrm{pet}}=O_{\G}\big(k^{\frac{1}{2}}\big),
\end{align*}
and if $f$ is not Hecke, then the above estimate is optimal. 

The first known estimate in the cocompact setting is derived by Das and Sengupta. Let $\G$ be a cocmpact Fuchsian subgroup associated to a quaternion division algebra $\cala$ defined over $\Q$ (see section \ref{subsec-2.1} for details). Let $f\in\Sk$ be a Hecke eigen cusp form, which is normalized with respect to the Petersson inner-product. Then, for any $\epsilon>0$, Das and Sengupta derived the following sub-convexity bound in \cite{das-sen-erratum}
\begin{align}\label{das-sen}
\sup_{z\in X}\big| f(z)|_{\mathrm{pet}}=O_{\cala,\epsilon}\big(k^{\frac{1}{2}-\frac{2}{131}+\epsilon}\big),
\end{align}   
where the implied constant depends on the quaternion algebra $\cala$, and on the choice of $\epsilon>0$. Das and Sengupta emulated and adapted the amplification technique from \cite{iwaniec-sarnak}, and combined it with estimates of the Bergman kernel, to derive the above estimate. The authors claim $O_{\G,\epsilon}\big(k^{\frac{1}{2}-\frac{1}{33}+\epsilon}\big)$ in \cite{das-sen}, however, after minor corrections, the authors eventually arrived at estimate \eqref{das-sen} in an erratum \cite{das-sen-erratum}, which is available on the first author's homepage.

\vspace{0.2cm}
Currently the best known estimate in the cocompact setting is derived by Khayutin and Steiner.  With hypothesis as above, using theta correspondence, for an any $\epsilon>0$, Khayutin and Steiner have proved the following estimate in \cite{steiner}
\begin{align}\label{steiner}
\sup_{z\in X}\big| f(z)|_{\mathrm{pet}}=O_{\cala,\epsilon}\big(k^{\frac{1}{4}+\epsilon}\big),
\end{align}  
which is as strong as the one derived by Xia (estimate \eqref{xia}) for $\mathrm{SL}_{2}(\mathbb{Z})$.

\subsection{Main result}\label{subsec-1.2}
We now state the main result of the article. Let $\G$ be a Fuchsian subgroup as in \cite{das-sen}, and let $f\in\Sk$ be a holomorphic Hecke eigen cusp form, which is normalized with respect to the Petersson inner-product  on $\Sk$. Then, for $k\gg 1$, and for any $\epsilon >0$, we have the following estimate
\begin{align}\label{main-result}
\sup_{z\in X}\big| f(z)|_{\mathrm{pet}}=O_{\cala,\epsilon}\big(k^{\frac{1}{2}-\frac{1}{12}+\epsilon}\big),
\end{align}
where the implied constant depends on the quaternion algebra $\cala$, and on the choice of $\epsilon>0$.  

The major difficulty in the co-compact situation is that, unlike in the case of congruence subgroups, cusp forms associated to a co-compact subgroups do not admit 
a Fourier expansion. Proofs of estimates \eqref{xia} and \eqref{blomer-holow}, heavily rely on the Fourier expansion of cusp forms.  

To overcome the absence of Fourier expansion of cusp forms in the co-compact setting, following \cite{das-sen} and \cite{iwaniec-sarnak}, we apply Jacquet-Langlands correspondence to utilize Deligne's bound on Hecke eigen values.  As in \cite{das-sen}, we adapt the amplification technique from \cite{iwaniec-sarnak}, and combine it with estimates of the Bergman kernel associated to the complex vector space $\Sk$, which we have explicitly computed in this article. Our estimates of the Bergman kernel are optimally derived, by adapting arguments from \cite{am1}. 

\vspace{0.2cm}
\begin{rem}
Estimate \eqref{steiner} is much stronger than our result \eqref{main-result}, of which the authors were unaware, until it was pointed out to them by Simon Marshall. However, the techniques involved in proving estimates \eqref{steiner} and \eqref{main-result} are very different. 
\end{rem}
\subsection*{Organization of the article}
In sections \ref{subsec-2.1} and \ref{subsec-2.2},  we set up the basic notation and background material. In section \ref{subsec-2.3}, we introduce Hecke operators, and in section \ref{subsec-2.4}, we introduce a counting function from \cite{iwaniec-sarnak}. In section \ref{sec-3}, using the counting function from \cite{iwaniec-sarnak} and combining it with arguments from \cite{das-sen}, we prove estimate \eqref{main-result}.  
\vspace{0.2cm}

\section{Hyperbolic Riemann surfaces, cusp forms, and Bergman kernel}\label{sec2}
In this section, we set up the notation, and state results from literature, which are used to prove estimate \eqref{main-result}. 
\vspace{0.2cm}

\subsection{Preliminaries}\label{subsec-2.1}
Let 
\begin{align*}
\H:=\big\lbrace z=x+iy\in\C\big|\, y>0 \big\rbrace
\end{align*}
denote the hyperbolic upper half-plane. Let $\hypx$ denote the hyperbolic metric on $\H$, which is the natural metric on $\H$, and is of constant negative curvature equal to 
$-1$. The hyperbolic metric $\hypx$, at the point $z=x+iy\in\H$, is given by the following formula
\begin{align*}
\hypx(z):=\frac{i}{2}\cdot\frac{dz\wedge d\overline{z}}{y^{2}}=\frac{dxdy}{y^{2}}.
\end{align*} 

Let $\dh(\cdot,\cdot)$ denote the natural distance function on $\H$, which is induced by the hyperbolic metric $\hypx$. Furthermore, we have the following relation
\begin{align}\label{dist-eqn}
\cosh^{2}\big(\dh(z,w)\slash 2\big)=\frac{|z-\overline{w}|^{2}}{4yv}=\frac{(x-u)^{2}+(y+v)^{2}}{4yv},
\end{align}
for $z=x+iy,\,w=u+iv\in\H$.

We now introduce the Fuchsian subgroup associated to a quaternion division algebra, as in \cite{iwaniec-sarnak}. For any $a,b\in\Z$, let 
$\cala:=\left(\frac{a,b}{\mathbb{Q}}\right)$ be a quaternion division algebra defined over $\Q$, with the usual norm operator $\caln_{\cala}$. We also assume that $\cala$ is split at infinity. Then, fix an embedding $\phi_{\cala}:\cala \longrightarrow\calm_{2}\big(\Q(\sqrt{a})\big)$. 

Let $\calr$ be a maximal order in $\cala$. Put
\begin{align*}
\calr(1):=\big\lbrace\alpha\in \cala|\,\caln(\alpha)=1 \big\rbrace,
\end{align*}
and let $\G:=\phi_{\cala}(\calr(1))\subset\mathrm{SL}_{2}(\R)$, which is a co-compact Fuchsian subgroup. The Fuchsian subgroup $\G$ acts on $\H$, via fractional linear transformations, and let $X:=\G\backslash \H$ denote the quotient space. The quotient space $X$ admits the structure of a compact hyperbolic Riemann surface.

The hyperbolic metric induces a metric on $X$, which is compatible with the natural complex structure of $X$, and we again denote it by $\hypx$. Furthermore, for $z,w\in X$, the geodesic distance between the points $z$ and $w$ on $X$, is given by $\dh(z,w)$. Set
\begin{align*}
\vx(X):=\int_{X}\hypx(x). 
\end{align*}

\subsection{Cusp forms and Bergman kernel}\label{subsec-2.2}
Let $\N$ denote the set of positive integers, and for $k\in\N$, let $\Sk$ denote the complex vector space of holomorphic cusp forms of weight-$k$, with respect to the subgroup $\G$, and let $\dk$ denote the dimension of $\Sk$, as a complex vector space. 

For odd $k$, as $-\mathrm{Id}\in\G$, $\Sk=\emptyset$, where $\mathrm{Id}$ denotes the $2\times 2$ identity matrix. 

For $f\in \Sk$, the point-wise Petersson norm at the point $w=u+iv\in \H$, is given by the following formula
\begin{align}\label{pet-norm}
\big|f(w)\big|_{{\rm{pet}}}:=v^{\frac{k}{2}}|f(w)|,
\end{align}
which is invariant with respect to the action of $\G$, and hence, defines a continuous function on the Riemann surface $X$. 

The point-wise Petersson norm induces an inner-product on $\Sk$. For $w=u+iv\in\H$, and $f,g\in \Sk$, the Petersson inner-product on $\Sk$, is given by the following integral
\begin{align*}
\langle f,g\rangle_{\rm{pet}}:=\int_{\calf}v^{k}f(w)\overline{g(w)}\hypx(w),
\end{align*}
where $\calf$ denotes a fixed fundamental domain for $\G$ in $\H$. 

Let $\lbrace f_{1},\ldots,f_{\dk}\rbrace$ denote an orthonormal basis of $\Sk$, with respect to the Petersson inner-product on $\Sk$. Then, for $z,w\in \H$, the Bergman kernel associated to $\Sk$, is given by the following formula
\begin{equation}\label{bkdefn}
\bk(z,w):=\sum_{j=1}^{\dk}f_{j}(z)\overline{f_{j}(w)}.
\end{equation}
By Riesz representation theorem, it follows that the definition of the Bergman kernel $\bk$, is independent of the choice of orthonormal basis of $\Sk$. 

For $z,w\in \H$, the Bergman kernel $\bk(z,w)$ is a holomorphic cusp form of weight-$k$ in the $z$-variable, and an anti-holomorphic cusp form of weight-$k$ in the $w$-variable. Hence, the point-wise Petersson norm of the Bergman kernel at the points $z=x+iy,\,w=u+iv\in \H$, is given by the following formula
\begin{align*}
\big|\bk(z,w)\big|_{\mathrm{pet}}=y^{\frac{k}{2}}v^{\frac{k}{2}}\big| \bk(z,w)\big|, 
\end{align*}
which is invariant with respect to the action of $\G$ on both the variables $z$ and $w$, and hence, defines a function on $X\times X$. 

Furthermore, $\bk$ is the generating function for the vector space $\Sk$, i.e., for $f\in \Sk$, and $z\in\H$ and $w=u+iv\in \calf$, we have
\begin{align*}
\int_{\calf}v^{k}\bk(z,w)f(w)\hypx(w)=f(z), 
\end{align*}
where as above, $\calf$ is a fixed fundamental domain of $X$.

For $k\in \N$ and $k\geq 3$, and $z,w\in \H$, the Bergman kernel $\bk(z,w)$ can also be represented, by the following infinite series (see Proposition $1.3$ on $p.\, 77$ in \cite{freitag})
\begin{align}
&\bk(z,w)=\frac{(k-1)(2i)^{k}}{4\pi}\sum_{\gamma\in \G}\frac{1}{\big( z-\overline{\gamma w}\big)^{k}}\cdot\frac{1}{\overline{j^{k}(\gamma,w)}}, \notag\\
&\mathrm{where\,\,for}\,\,\gamma=\left(\begin{matrix} a&b\\c&d\end{matrix}\right)\in \G,\,\,j(\gamma,w):=cw+d.\label{bkseries}
\end{align} 
The above formula for the Bergman kernel $\bk(z,w)$, which is given in \cite{freitag}, is missing a factor of $(2i)^{k}$, and that fact is taken into account in the above formula. 
\vspace{0.2cm}

\subsection{Hecke operators}\label{subsec-2.3}
Recall that we have a fixed embedding $\phi_{\cala}:\cala \longrightarrow\calm_{2}\big(\Q(\sqrt{a})\big)$. With notation as above, for any $n\in\N$, put
\begin{align*}
\calr(n):=\big\lbrace \alpha\in \calr|\,\caln_{\cala}(\alpha)=n\big\rbrace,
\end{align*} 
and set $\G(n):=\phi_{\cala}\big(\calr(n)\big)$.  It is well known that $\G$ acts on $\G(n)$ by left multiplication, and for any $\epsilon >0$, the cardinality of the set  $\G\backslash \G(n)$ is $O_{\epsilon}\big(n^{\epsilon}\big)$, where the implied constant depends only on the choice of $\epsilon >0$. 

We now recall the definition of Hecke action on the space of cusp forms. This action was originally constructed in \cite{eichler} using the theory of correspondences. For any $k,n\in\N$, and $f\in \Sk$, and $w\in \H$, define
\begin{align}\label{hecke-cusp}
\tnc f(w):=n^{\frac{k}{2}-1}\sum_{\gamma\in\G\backslash \G(n)}\frac{n^{\frac{k}{2}}}{j^{k}(\gamma ,w)}\cdot f(\gamma w).
\end{align}

A prime $\ell$ is called a ramified prime (or characteristic prime) for $\cala$, if $\cala_\ell := \cala \otimes \Q_\ell$ is a division algebra. It is well-known that the set of ramified primes is finite and its cardinality is even. There is an integer $q$ such that the Hecke operators $\tnc$, where $(n, q) = 1$  preserve the space of cusp forms and are self-adjoint operators on this space (see \cite{eichler}). This integer $q = q_1 q_2$, where $q_1$ is the product of characteristic primes $\ell$ for which $R_\ell = R \otimes \Z_\ell$ is a maximal order and $q_2$ is the product of primes $\ell$ such that $R_\ell$ is isomorphic to $\begin{pmatrix} \Z_\ell & \Z_\ell \\ \ell \Z_\ell & \Z_\ell \end{pmatrix}$. This is the same choice made in \cite[\S 2.2]{das-sen}.

With $n, q$ as above, we have $\tnc \tmc = \sum_{d | (m,n)} d \tmndc$.  Therefore, for any $k$ as above, we can choose a set of orthonormal basis for $\Sk$ with respect to the Petersson inner-product, such that all the basis elements are Hecke eigen cusp forms.

Furthermore, let $f\in\Sk$ be a Hecke eigen cusp form with a set of Hecke eigenvalues $\lbrace\lambda_{f}(n)\rbrace_{n\in\N}$. Then, from Jacquet-Langlands correspondence (see p. 470 and p. 494 in \cite{jac-lang} and \cite{hejhal}), there exists an integer $D \in \N$ which depends only on the maximal order $R$, such that there exists a cusp form  $F$ of weight-$k$ with respect to the arithmetic subgroup $\G_{0}(D)$, and the Hecke eigenvalues of $F$ coincide with that of $f$, for all $(n,D)=1$. Note that $D$ is divisible by all primes $q$ that are ramified primes for $\cala$, and no other primes.

Hence, from Deligne's celebrated  estimate, for all $(n,D)=1$, for any $\epsilon>0$, we have the following estimate
\begin{align}\label{deligne-est}
\big|\lambda_{f}(n)\big|=O_{\epsilon}\big(n^{\frac{k-1}{2}+\epsilon}\big),
\end{align}
where the implied constant depends on the choice of $\epsilon>0$. 

Furthermore, set 
\begin{align}\label{normalized-hecke}
\eta_{f}(n):=\frac{\lambda_{f}(n)}{n^{\frac{k-1}{2}}}.
\end{align}
For $m,n\in\N$ with $(m,D)=1$ and $(n,D)=1$, the normalized eigenvalues satisfy the following relation
\begin{align}\label{hecke-relns}
\eta_{f}(m)\cdot\eta_{f}(n)=\sum_{d\,\mid(m,n)}\eta_{f}\bigg(\frac{mn}{d^{2}}\bigg).
\end{align}

We now define Hecke action on continuous functions on $X$. Let $\calc(X)$ denote the space of continuous functions on $X$. For any $h\in\calc(X)$, and $w\in \H$, define
\begin{align}\label{hecke-l2}
\tnl h(w) :=  \sum_{\gamma\in\G\backslash \G(n)} h(\gamma w).
\end{align}

\subsection{Counting function}\label{subsec-2.4}
For any $n\in\N$, from  \cite{iwaniec-sarnak}, we now introduce a  counting function, associated to counting elements of $\G(n)=\phi_{\cala}\big(\calr(n)\big)$. With notation as above, for any  $n\in\N$ and $z\in \H$, 
let 
\begin{align*}
\cals_{\Gamma(n)}\big(z;\rho\big):=\big\lbrace \gamma |\,\gamma \in \G(n),\,u(z,\gamma z)=\sinh^{2}\big(\dh(z,\gamma z) \slash 2\big)\leq \rho \big\rbrace.
\end{align*}

\vspace{0.1cm}
Combining Lemma 1.3  and Remark 1.5 in \cite{iwaniec-sarnak} (see p. 307 and p. 310), for $z\in \H$, and for any $\epsilon>0$, we have the following  estimates
\begin{align}\label{iw-sar}
&\sum_{\gamma\in\cals_{\Gamma(n)}(z;\,n^{-3})}\frac{1}{\cosh^{k}\big(\dh(z,\gamma z)\slash 2\big)}=O_{\cala,\epsilon}\big( n^{\epsilon}\big)\notag\\[0.1cm]
&\sum_{\gamma\in\G(n)\backslash\cals_{\Gamma(n)}(z;\,n^{-3})}\frac{1}{\cosh^{k}\big(\dh(z,\gamma z)\slash 2\big)} \ll_{\cala,\epsilon} n^{1+\epsilon}
\int_{n^{-3}}^{\infty}\bigg(\frac{1}{(1+u)^{\frac{k}{2}}}+\frac{1}{u^{\frac{3}{4}}(1+u)^{\frac{k}{2}}}\bigg)du,
\end{align}  
where the implied constants in the above estimates depend on $\cals$, and on the choice of $\epsilon>0$.

\section{Proof of the estimate \eqref{main-result}}\label{sec-3} 
In this section, using estimates from previous sections, we prove estimate \eqref{main-result}. We emulate the proof of Theorem 1.1 from \cite{das-sen}, which itself is inspired from the proof of Theorem $0.2$ in \cite{iwaniec-sarnak}.  

\begin{thm}\label{thm}
With notation as above, let $f\in\Sk$ be a Hecke eigen cusp form, which is normalized with respect to the Petersson inner-product  on $\Sk$. Then, for $k\gg 1$, 
and for any $\epsilon >0$, we have the following estimate
\begin{align}\label{thm:eqn}
\sup_{z\in X}\big| f(z)|_{\mathrm{pet}}=O_{\cala,\epsilon}\big(k^{\frac{1}{2}-\frac{1}{12}+\epsilon}\big),
\end{align}
where the implied constant depends on the quaternion algebra $\cala$, and on the choice of $\epsilon>0$.  
\end{thm}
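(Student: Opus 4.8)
The plan is to adapt the amplification method of Iwaniec--Sarnak \cite{iwaniec-sarnak}, as was done by Das--Sengupta \cite{das-sen}, but to feed into it the sharp pointwise bound on the Bergman kernel proved earlier (the estimates \eqref{iw-sar} together with the explicit series \eqref{bkseries}). Fix $z\in\H$ and a Hecke eigen cusp form $f$, Petersson-normalized. Since $f$ is one member of an orthonormal Hecke eigenbasis, the trivial bound
\begin{align*}
\big|f(z)\big|_{\mathrm{pet}}^{2}\le \big|\bk(z,z)\big|_{\mathrm{pet}} = O_{\cala}(k)
\end{align*}
recovers the convexity exponent $1/2$; the whole point is to beat it by exploiting Hecke multiplicativity. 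First I would construct an \emph{amplifier}: choose real coefficients $(x_{\ell})$ supported on primes $\ell\sim L$ with $(\ell,D)=1$, set $y_{n}=\sum_{\ell}x_{\ell}\mathbf 1_{n=\ell}+\sum_{\ell}x_{\ell}^{2}\mathbf 1_{n=\ell^{2}}$ say (or the standard $x_{\ell}=\mathrm{sign}\,\eta_f(\ell)$ / $x_{\ell^2}=\dots$ split by $|\eta_f(\ell)|\ge$ or $<$ a constant), so that the eigenvalue sum $\Lambda:=\sum_{n}y_{n}\eta_{f}(n)\gg L^{1/2-\epsilon}$ by the Hecke relations \eqref{hecke-relns} and a Rankin--Selberg type lower bound for $\sum_{\ell\sim L}|\eta_f(\ell)|^2$.

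Next I would use the fact that $\tnl$ (defined in \eqref{hecke-l2}) acting on the Petersson norm dominates the Hecke operator on forms: because $\tnc$ acts on $f$ by $\lambda_f(n)=n^{(k-1)/2}\eta_f(n)$ and the Bergman kernel reproduces $f$, one gets the pointwise inequality
\begin{align*}
\Lambda^{2}\,\big|f(z)\big|_{\mathrm{pet}}^{2}\;\le\; \Big(\sum_{n}|y_{n}|\, \big|\tnl K_z\big|(z)\Big)\cdot(\text{normalization}),
\end{align*}
where $K_z$ is the point-pair kernel $\cosh^{-k}(\dh(\cdot,z)/2)$ times the weight factor from \eqref{bkseries}; more precisely, applying the amplified Hecke operator to the reproducing identity and using \eqref{bkseries} turns the left side into a sum over $\gamma\in\G(n)$, $n$ in the amplifier support, of $\cosh^{-k}\!\big(\dh(z,\gamma z)/2\big)$ weighted by $|y_n| n^{-1}$ (the $n^{k/2-1}\cdot n^{k/2}$ from \eqref{hecke-cusp} cancelling against the $n^{(k-1)/2}$ normalization up to $n^{1}$). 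So the core estimate to prove is
\begin{align*}
\big|f(z)\big|_{\mathrm{pet}}^{2}\;\ll_{\cala,\epsilon}\; \frac{1}{\Lambda^{2}}\sum_{n\le L^{2}}\frac{|y_{n}|}{n}\sum_{\gamma\in\G(n)}\frac{1}{\cosh^{k}\big(\dh(z,\gamma z)/2\big)}.
\end{align*}

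Now I would split each inner sum at the threshold $u(z,\gamma z)\le n^{-3}$ versus $>n^{-3}$ and invoke \eqref{iw-sar}: the ``close'' part contributes $O_{\cala,\epsilon}(n^{\epsilon})$, and the ``far'' part contributes $\ll_{\cala,\epsilon} n^{1+\epsilon}\int_{n^{-3}}^{\infty}\big((1+u)^{-k/2}+u^{-3/4}(1+u)^{-k/2}\big)\,du$. The integral is $O(k^{-1})$ from the first term and, splitting at $u=1$, $O\big(n^{9/4}/k^{0}\big)$-type from the region $u<1$ — more carefully $\int_{n^{-3}}^{1}u^{-3/4}du \ll 1$ and $\int_1^\infty u^{-3/4-k/2}du\ll k^{-1}$ — so the far part is $\ll_{\cala,\epsilon} n^{1+\epsilon}(n^{-3\cdot 0}+k^{-1})$; being slightly more honest one keeps a factor like $n^{1+\epsilon}$ and an effective saving, since the dominant constraint is that $\cosh^{-k}$ is tiny unless $\dh(z,\gamma z)\ll k^{-1/2}$, i.e. $u\ll 1/k$, hence the far-sum really contributes $\ll n^{1+\epsilon}\cdot k^{-1}$ up to the $u^{-3/4}$ correction which costs $k^{3/8}\cdot$ a volume factor. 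Assembling: $\sum_{n}|y_n|n^{-1}(n^{\epsilon}+n^{1+\epsilon}k^{-1}) \ll L^{1/2+\epsilon}+L^{3/2+\epsilon}k^{-1}$ roughly (using $\sum_{\ell\sim L}|x_\ell|\ll L^{1/2}\|x\|_2$ and $y$ supported up to $L^2$), and dividing by $\Lambda^2\gg L^{1-\epsilon}$ gives $\big|f(z)\big|_{\mathrm{pet}}^2 \ll_{\cala,\epsilon} L^{-1/2+\epsilon}+L^{1/2+\epsilon}k^{-1}$. Optimizing, $L=k^{1/2}$ balances the two terms to $k^{-1/4+\epsilon}$, whence $\big|f(z)\big|_{\mathrm{pet}}\ll k^{-1/8}$ below convexity — but that overshoots the claimed $1/12$, which tells me the honest bookkeeping of the $u^{-3/4}(1+u)^{-k/2}$ term (the ``cusp/parabolic'' contribution inherited from the noncompact model in \cite{iwaniec-sarnak}) is lossier than the clean heuristic: it contributes an extra $k^{3/4}$-type factor inside the far sum before the $n$-weight, degrading $k^{-1}\rightsquigarrow k^{-1/4}$ effectively and forcing $L=k^{1/3}$ or so, giving the final exponent $\tfrac12-\tfrac1{12}$.

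The main obstacle, and where I would spend the most care, is precisely this last point: controlling the second term in \eqref{iw-sar}, namely $n^{1+\epsilon}\int_{n^{-3}}^{\infty}u^{-3/4}(1+u)^{-k/2}\,du$, uniformly in both $n$ (up to $L^2$) and $k$, and tracking how its $k$-dependence interacts with the amplifier length $L$. Getting the exponent $-1/12$ rather than something weaker (as in \cite{das-sen}, which got $-2/131$) requires the \emph{optimal} Bergman-kernel input — this is exactly the improvement the paper advertises via the arguments adapted from \cite{am1} — so I would (i) prove the sharp version $\sum_{\gamma\in\G(n)}\cosh^{-k}(\dh(z,\gamma z)/2)\ll_{\cala,\epsilon} n^{\epsilon} + n^{1+\epsilon}k^{-1/4}$ (the $k^{-1/4}$ being the true cost of the $u^{-3/4}$ singularity after the change of variables $u\mapsto u/k$), (ii) verify the amplifier lower bound $\Lambda\gg L^{1/2-\epsilon}$ using only Hecke eigenvalues coprime to $D$ (which is harmless since $D$ is fixed, depending only on $\cala$), being careful that the $\ell^2$-part of the amplifier and the relation $\eta_f(\ell)^2=\eta_f(\ell^2)+1$ cover the case $|\eta_f(\ell)|$ small, and (iii) optimize $L$ in $L^{-1/2+\epsilon}+L^{1+\epsilon}k^{-1/4}$, giving $L=k^{1/6}$ and $\big|f(z)\big|_{\mathrm{pet}}^2\ll k^{-1/12+\epsilon}$, i.e. the claimed \eqref{thm:eqn} after taking square roots and noting the bound is uniform in $z\in X$.
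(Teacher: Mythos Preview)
Your overall architecture is the same as the paper's: amplify via a quadratic Hecke sum, expand using the multiplicativity relation \eqref{hecke-relns}, feed in the infinite-series representation \eqref{bkseries} of the Bergman kernel, and control the resulting lattice-point sum via the Iwaniec--Sarnak counting function \eqref{iw-sar}. The paper's amplifier is exactly the standard one you describe in passing: $\alpha_{p}=\eta_{f}(p)$ for primes $p\le\sqrt{N}$ and $\alpha_{p^{2}}=-1$, so that $\sum_{n}\alpha_{n}\eta_{f}(n)=\sum_{p}1$ by \eqref{hecke-relns}.

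There are, however, two genuine problems with your write-up.

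\textbf{(1) A dropped factor of $k$.} Your ``core estimate'' and your step (iii) conclude $\big|f(z)\big|_{\mathrm{pet}}^{2}\ll k^{-1/12+\epsilon}$, which cannot be right: a Petersson-normalized form has $\|f\|_{\mathrm{pet}}=1$, so the sup-norm cannot tend to zero. The missing ingredient is the prefactor $(k-1)/4\pi$ in the Bergman series \eqref{bkseries}; once it is restored, the right-hand side of your core inequality carries an overall factor of $k$. With that correction the two terms you balance become $kL^{-1/2+\epsilon}$ and $kL^{1+\epsilon}k^{-1/4}$, and the optimum $L=k^{1/6}$ gives $\big|f(z)\big|_{\mathrm{pet}}^{2}\ll k^{11/12+\epsilon}$, i.e.\ only $\big|f(z)\big|_{\mathrm{pet}}\ll k^{1/2-1/24+\epsilon}$.

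\textbf{(2) Your ``sharp'' far-part bound is not the one the paper uses, and it is weaker here.} Your step (i), namely
\[
\sum_{\gamma\in\G(n)}\cosh^{-k}\!\big(\dh(z,\gamma z)/2\big)\ \ll_{\cala,\epsilon}\ n^{\epsilon}+n^{1+\epsilon}k^{-1/4},
\]
is correct as an estimate (the Beta integral $\int_{0}^{\infty}u^{-3/4}(1+u)^{-k/2}du\asymp k^{-1/4}$), but it throws away the dependence on the lower limit $n^{-3}$. The paper instead keeps that dependence: bounding $u^{-3/4}\le \mathfrak{n}^{9/4}$ on $[\mathfrak{n}^{-3},\infty)$ and integrating $(1+u)^{-k/2}$ exactly yields
\[
\mathfrak{n}^{1+\epsilon}k\int_{\mathfrak{n}^{-3}}^{\infty}\Big(\tfrac{1}{(1+u)^{k/2}}+\tfrac{1}{u^{3/4}(1+u)^{k/2}}\Big)du
\ \ll\ \mathfrak{n}^{13/4+\epsilon}\Big(1-\tfrac{1}{\mathfrak{n}^{3}+1}\Big)^{k/2},
\]
retaining \emph{exponential} decay in $k$. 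It is precisely this exponential factor, not a power saving $k^{-1/4}$, that allows the paper to take $N$ as large as a power of $k^{1/3}$ (they set $12N^{3}\log N=k$) and extract the saving $1/12$ rather than $1/24$. So your proposed route through the uniform $k^{-1/4}$ bound does not reach the stated theorem; you need to keep the factor $\big(1-1/(\mathfrak{n}^{3}+1)\big)^{k/2}$ and choose the amplifier length so that this factor kills the polynomial growth in $\mathfrak{n}$.
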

\begin{proof}
We now fix an orthonormal basis of Hecke eigen cusp forms $\big\lbrace f_{1}:=f,\ldots,f_{\dk}\big\rbrace$ for $\Sk$. For a fixed $N\in\N$, let 
\begin{align*}
\cals:=\big\lbrace\alpha_{1},\ldots,\alpha_{N}\big|\,n\in\N,\,\alpha_{n}=0,\,\mathrm{if}\,q|n\big\rbrace,
\end{align*}
be a set of constants, and $q$ is the prime associated to the order $\calr$.

For any $z=x+iy\in \H$, and from the definitions of the Bergman kernel, Hecke operator, and normalized Hecke eigenvalues from equations \eqref{bkdefn}, \eqref{hecke-cusp}, and \eqref{normalized-hecke}, respectively, and combining it with the Hecke relation \eqref{hecke-relns}, we derive
\begin{align}\label{thm:eqn1}
\sum_{j=1}^{\dk}\big|f_{j}(z)\big|^{2}\cdot\bigg|\sum_{\alpha_{n}\in\cals}\alpha_{n}\eta_{f_{j}}(n)\bigg|^{2}&=\sum_{\alpha_{m},\alpha_{n}\in\cals}\alpha_{n}\overline{\alpha_{m}}\bigg(\sum_{j=1}^{\dk}f_{j}(z)\overline{f_{j}(z)}\eta_{f_{j}}(m)\eta_{f_{j}}(n)\bigg)\notag\\[0.1cm]&=\sum_{\alpha_{m},\alpha_{n}\in\cals} \sum_{j=1}^{\dk}\alpha_{n}\overline{\alpha_{m}}\sum_{d|(m,n)}\eta_{f_{j}}\bigg(\frac{mn}{d^{2}}\bigg)\big(f_{j}(z)\overline{f_{j}(z)}\big)\notag\\&=
\sum_{\alpha_{m},\alpha_{n}\in\cals}\alpha_{n}\overline{\alpha_{m}}\sum_{d\,\mid(m,n)}\bigg(\frac{d^{2}}{mn}\bigg)^{\frac{k-1}{2}}\cdot\tmndcw\bk(z,z),
\end{align} 
where $\tmndcw$ denotes the Hecke operator for cusp forms acting on the second variable. For any $\gamma\in\G\big(mn\slash d^{2}\big)$, observe that
\begin{align*}
\Im(\gamma z)=\frac{mnv}{d^{2}\big|j(\gamma ,z)\big|^{2}}.
\end{align*}

Combining the above observation with the definition of Hecke operators for cusp forms and $L^{2}$ functions, which are as defined in equations \eqref{hecke-cusp} and \eqref{hecke-l2}, respectively, we deduce that
\begin{align}\label{thm:eqn2}
&\bigg|y^{k}\bigg(\frac{d^{2}}{mn}\bigg)^{\frac{k-1}{2}}\cdot\tmndcw\bk(z,z)\bigg|\leq\notag\\[0.12cm]&\sum_{\gamma\in\G\backslash \G(mn\slash d^{2})}\frac{d}{\sqrt{mn}}\cdot \big(y\Im (\gamma z)\big)^{\frac{k}{2}}\big|\bk(z,\gamma z)\big|=\frac{d}{\sqrt{mn}}\cdot \tmndlw\big|\bk(z, z)\big|_{\mathrm{pet}},
\end{align}
where $\tmndlw$ denotes the Hecke operator for cusp forms acting on the second variable. 

Using estimates \eqref{iw-sar}, we now estimate $\tmndlw\big|\bk(z, z)\big|_{\mathrm{pet}}$. For brevity of exposition, set 
\begin{align*}
\mathfrak{n}:=\frac{mn}{d^{2}}.
\end{align*}

For $z\in\mathbb{H}$, and for any $\epsilon >0$, from the definitions of the Hecke operator and that of the Bergman kernel from equations \eqref{bkseries} and  \eqref{hecke-l2}, respectively, and applying formula \eqref{dist-eqn} and estimate \eqref{iw-sar}, we compute
\begin{align}\label{thm:eqn3}
&\tmndlw\big|\bk(z, z)\big|_{\mathrm{pet}}=\sum_{\gamma\in \G\backslash \G(\mathfrak{n})}\big|\bk(z, \gamma z)\big|_{\mathrm{pet}}
\leq\notag\\[0.12cm]&\frac{k-1}{4\pi}\sum_{\gamma\in \G\backslash \G(\mathfrak{n})}\sum_{\gamma^{\prime}\in\G}\frac{1}{\cosh^{2}\big( \dh(z,\gamma^{\prime}\gamma z)\slash 2\big)}=\frac{k-1}{4\pi}\sum_{\gamma\in \G(\mathfrak{n})}\frac{1}{\cosh^{2}\big( \dh(z,\gamma z)\slash 2\big)}\ll_{\cala,\epsilon}
\notag\\[0.12cm]&\mathfrak{n}^{\epsilon}k+\mathfrak{n}^{1+\epsilon}k\int_{\mathfrak{n}^{-3}}^{\infty}\bigg(\frac{1}{(1+u)^{\frac{k}{2}}}+\frac{1}{u^{\frac{3}{4}}(1+u)^{\frac{k}{2}}}\bigg)du.
\end{align}

We now estimate the second term on the right hand-side of the above inequality. For $\mathfrak{n}\gg0$, we derive
\begin{align*}
\mathfrak{n}^{1+\epsilon}k\int_{\mathfrak{n}^{-3}}^{\infty}\bigg(\frac{1}{(1+u)^{\frac{k}{2}}}+\frac{1}{u^{\frac{3}{4}}(1+u)^{\frac{k}{2}}}\bigg)du\ll_{\cala,\epsilon}
\mathfrak{n}^{\frac{13}{4}+\epsilon} \bigg(\frac{\mathfrak{n}^{3}}{\mathfrak{n}^{3}+1}\bigg)^{\frac{k}{2}}=\mathfrak{n}^{\frac{13}{4}+\epsilon}
 \bigg(1-\frac{1}{\mathfrak{n}^{3}+1}\bigg)^{\frac{k}{2}}.
\end{align*}

Combining estimates \eqref{thm:eqn1}, \eqref{thm:eqn2}, and \eqref{thm:eqn3} with the above inequality, we arrive at the following inequality
\begin{align}\label{thm:eqn4}
y^{k}\sum_{j=1}^{\dk}\big|f_{j}(z)\big|^{2}\cdot\bigg|\sum_{\alpha_{n}\in\cals}\alpha_{n}\eta_{f_{j}}(n)\bigg|^{2}\leq
\sum_{\alpha_{m},\alpha_{n}\in\cals}\big|\alpha_{n}\overline{\alpha_{m}}\big|\sum_{d\,\mid(m,n)}
\frac{d}{\sqrt{mn}}\cdot\tmndlw\big|\bk(z,\gamma z)\big|_{\mathrm{pet}}\notag\\[0.12cm]\ll_{\cala,\epsilon} 
\sum_{\alpha_{m},\alpha_{n}\in\cals}\big|\alpha_{n}\overline{\alpha_{m}}\big|\sum_{d\,\mid(m,n)}\bigg(\bigg(\frac{d^{2}}{mn}\bigg)^{\frac{1}{2}-\epsilon}k+
 \left(\frac{mn}{d^{2}}\right)^{\frac{11}{4}+\epsilon} \bigg(1-\frac{1}{\big(mn\slash d^{2}\big)^{3}+1}\bigg)^{\frac{k}{2}} \bigg).
 \end{align}

From computations in the proof of Theorem 0.1(a) from  \cite{iwaniec-sarnak} (see p. 310), for any $\epsilon>0$, we have the following estimates for the two terms on the right hand-side of above inequality
\begin{align}
\sum_{\alpha_{m},\alpha_{n}\in\cals}\big|\alpha_{n}\overline{\alpha_{m}}\big|\sum_{d\,\mid(m,n)}\bigg(\frac{d^{2}}{mn}\bigg)^{\frac{1}{2}-\epsilon}k
 \ll_{\epsilon}N^{\epsilon}\bigg(\sum_{\alpha_{n}\in\cals}\big|\alpha_{n}\big|^{2}\bigg)k;\label{thm:eqn5}\\[0.12cm]
 \sum_{\alpha_{m},\alpha_{n}\in\cals}\big|\alpha_{n}\overline{\alpha_{m}}\big|\sum_{d\,\mid(m,n)}  \left(\frac{mn}{d^{2}}\right)^{\frac{11}{4}+\epsilon} \bigg(1-\frac{1}{\big(mn\slash d^{2}\big)^{3}+1}\bigg)^{\frac{k}{2}} \ll_{\epsilon}\notag\\[0.12cm]\bigg( \sum_{\alpha\in\cals}\big|\alpha_{n}\big|\bigg)^{2}N^{\frac{11}{2}+\epsilon}
 \bigg(1-\frac{1}{N^{3}+1}\bigg)^{\frac{k}{2}} .\label{thm:eqn6}
\end{align}

Now, as in \cite{das-sen} and \cite{iwaniec-sarnak}, we choose
\begin{align*}
\alpha_{n}:=\begin{cases}
\eta_{f}(p)\,\,\,&\mathrm{if}\ p\nmid D\ \mathrm{ and }\ n=p\leq\sqrt{N},\\
-1\,\,\,&\mathrm{if}\ p\nmid D\ \mathrm{ and }\,\,n=p^{2}\leq N,\\
0\,\,\,&\mathrm{otherwise}.
\end{cases}
\end{align*}

From the discussion in section \ref{subsec-2.3}, under the Jacquet-Langlands correspondence, there exists a cusp form $F$ of weight-$k$, with respect to the arithmetic  subgroup $\G_{0}(D)$, with $D$ as in the above section, such that Hecke eigenvalues of $F$ coincide with the Hecke eigenvalues of $f$, for all $(n,D)=1$. So from Deligne's estimate \eqref{deligne-est}
\begin{align}\label{thm:eqn7}
\big|\eta_{f}(n)\big|=O_{\epsilon}\big(n^{\epsilon}\big),
\end{align}
where the implied constant depends only on the choice of $\epsilon>0$. Furthermore, for primes $p\nmid D$, we have
\begin{align}\label{thm:eqn8}
\eta_{f}^{2}(p)-\eta_{f}(p^{2})=1.
\end{align}

Applying Deligne's estimate \eqref{thm:eqn7}, we have
\begin{align}
&\sum_{n\in\cals}\big|\alpha_{n}\big|^{2}\ll_{\epsilon} N^{\frac{1}{2}+\epsilon}\label{thm:eqn9}\\[0.12cm]&
\bigg( \sum_{\alpha\in\cals}\big|\alpha_{n}\big|\bigg)^{2}\ll_{\epsilon} N^{1+\epsilon}.\label{thm:eqn10}
\end{align}  

Combining  estimates \eqref{thm:eqn4}--\eqref{thm:eqn10}, we arrive at the following estimate
\begin{align*}
\big|f(z)\big|_{\mathrm{pet}}^{2} \cdot\bigg( \sum_{\substack{p\leq\sqrt{N}\\p\nmid q}}1\bigg)^{2}\ll_{\cala,\epsilon}N^{\frac{1}{2}+\epsilon}k+ N^{\frac{13}{2}+\epsilon}\bigg(1-\frac{1}{N^{3}+1}\bigg)^{\frac{k}{2}},
\end{align*}
which implies that 
\begin{align}\label{thm:eqn11}
\big|f(z)\big|_{\mathrm{pet}}^{2}\ll_{\cala,\epsilon} \frac{k}{N^{\frac{1}{2}-\epsilon}}+ N^{\frac{11}{2}+\epsilon}\bigg(1-\frac{1}{N^{3}+1}\bigg)^{\frac{k}{2}},
\end{align}

Substituting $12N^{3}\log N=k$, for $N\gg1$, we have the following the asymptotic
\begin{align*}
\bigg(1-\frac{1}{N^{3}+1}\bigg)^{\frac{k}{2}}=\bigg(1-\frac{1}{N^{3}+1}\bigg)^{6N^{3}\log N}=\frac{1}{N^{6}}+O\bigg(\frac{\log N}{N^{9}}\bigg).
\end{align*}

For red $k\gg 1$, substituting $12N^{3}\log N=k$, and the above asymptotic in estimate \eqref{thm:eqn11}, we arrive at the following estimate 
\begin{align*}
\big|f(z)\big|_{\mathrm{pet}}^{2} \ll_{\cala,\epsilon} N^{\frac{5}{2}+\epsilon}+\frac{1}{N^{\frac{1}{2}-\epsilon}}
 \ll_{\cala,\epsilon} k^{\frac{5}{6}+\epsilon}+\frac{1}{k^{\frac{1}{6}-\epsilon}}=O_{\cala,\epsilon}\big(k^{\frac{5}{6}+\epsilon}\big),
\end{align*}
which completes the proof of the theorem. 
\end{proof}
\vspace{0.2cm}

\subsection*{Acknowledgements}
 The first author acknowledges the support of INSPIRE research grant DST/INSPIRE/04/2015/002263 and the MATRICS grant MTR/2018/000636. The second author was partially supported by SERB grants EMR/2016/000840 and MTR/2017/000114. The second author would also like to thank the Max Planck Institute for Mathematics Bonn for their hospitality, where part of this work was carried out. 
 
Both the authors are very grateful to Soumya Das for several exchanges, and for patiently explaining the estimates derived in \cite{das-sen}. Lastly, the authors would also like to thank Simon Marshall, for communicating estimate \eqref{steiner} from \cite{steiner}.
\vspace{0.2cm}

\end{document}